\numberwithin{equation}{section}
\newtheorem*{theorem}{Theorem}
\newtheorem{lemma}{Lemma}
\newtheorem{corollary}[lemma]{Corollary}
\theoremstyle{definition}
\DeclareMathOperator{\diag}{diag}
\def\expr#1{{\mathsf{#1}}}
\def\bigO{{\mathrm{O}}}
\def\K{{\mathbf{K}}}
\renewcommand\d{{\mathrm{d}}}
\def\conj#1{\overline{#1}}
\def\C{{\mathbb{C}}}
\renewcommand\P{{\mathbb{P}}}
\def\ZZ{Z,\conj{Z}}
\def\xy{x,\conj{y}}
\def\xx{x,\conj{x}}
\def\yy{y,\conj{y}}
\def\yx{y,\conj{x}}
\def\zz{z,\conj{z}}
\def\zo{z,\conj{0}}
\def\defas{:=}
\def\gt{>}
\def\lt{<}
\def\approaches{\rightarrow}
\def\Modulus#1{\left|#1\right|}
\def\norm#1{\| #1\|}
\def\diastasis{\tilde{d}}
\def\AffineBall#1#2{\{y:\, \diastasis(#1,y) < #2\}}
\def\EuBall#1{B(#1)}
\def\LocExpOfQ{h}
\def\twistSheaf#1{{\mathcal{O}(#1)}}
\def\BoundForLocDi{C_p}
\def\PosReals{[0,\infty)}
\def\BoundForGloDi{M_p}
\def\PosInt{\mathbb{N}}
\begin{document}

\title{Eventual positivity of Hermitian polynomials and integral operators
            \footnote{Article accepted by {\textit{Chin. Ann. Math. Ser. B}}}}
\author{Colin Tan \thanks{Email address: colinwytan@gmail.com}}
\date{}
\maketitle

\begin{abstract}
Quillen proved that, if a Hermitian bihomogeneous polynomial is strictly positive on the unit sphere,
    then repeated multiplication of the standard sesquilinear form to this polynomial eventually results in a sum of Hermitian squares.
Catlin-D'Angelo and Varolin deduced this positivstellensatz of Quillen from the eventual positive-definiteness of an associated integral operator.
Their arguments involve asymptotic expansions of the Bergman kernel.
The goal of this article is to give an elementary proof of the positive-definiteness of this integral operator.
\end{abstract}

\noindent {\it Keywords}: asymptotics, polynomial, positive

\bigskip\noindent {\it Mathematics Subject Classification (2010)}: 13F20, 34E05

\bigskip A central problem in real geometry is to establish certificates that directly witness the positivity of an algebraic morphism.
The first of such certificates, known as positivstellensatze, was Artin's 1927 solution to Hilbert's 17th problem \cite{Artin}.
After a hiatus of four decades, Quillen \cite{Quillen68} proved the first Hermitian postivstellensatz, thereby filling a gap in the literature.
His positivestellensatz states that, if a Hermitian bihomogeneous polynomial is strictly positive on the unit sphere,
    then repeated multiplication of the standard sesquilinear form to this polynomial eventually results in a sum of Hermitian squares.
We will deduce this as Corollary \ref{cor: QuillenPositivstellensatz} from the main result of this article.

Quillen's positivstellensatz has attracted several proofs \cite{CatlinDAngelo97, CatlinDAngelo99, DrouotZworski13, PutinarScheiderer10, ToYeung06}.
Some of these approaches lead to further improvements of Quillen's result.
For instance, Catlin-D'Angelo \cite{CatlinDAngelo99} gave a generalized embedding theorem of holomorphic vector bundles.
To-Yeung's positivstellensatz \cite{ToYeung06} is a more precise refinement of Quillen's result.
Putinar-Scheiderer \cite{PutinarScheiderer10} gave pesudoconvex bounaries other than the unit sphere on which every strictly
    positive algebraic morphism is a sum of Hermitian squares.

In 1997, Catlin-D'Angelo \cite{CatlinDAngelo97} independently rediscovered Quillen's result.
They observed that this positivstellensatz of Quillen is equivalent to the eventual positive-definiteness of an associated integral operator.
They then showed that this integral operator is well approximated by the Bergman kernel in the limit.
Later, this approach was taken by Varolin \cite{Varolin08}.
In this article, we give an elementary proof that follows Varolin's approach and deduce Quillen's positivstellensatz.

The global holomorphic sections of the tautological line bundle $\twistSheaf{1} \to \P^n$ over complex projective space form a complex vector space $H^0(\P^n, \twistSheaf{1})$.
Fix a basis $(\Phi_0,\ldots, \Phi_n)$ of $H^0(\P^n, \twistSheaf{1})$.
Let $r \defas |\Phi_0|^2 + \cdots + |\Phi_n|^2$.
Then $r$ induces a Hermitian metric $(s,\conj{s}) \mapsto |s|^2/r$ on $\twistSheaf{1}$ whose curvature is a Fubini-Study K\"{a}hler form on $\P^n$.
More generally, given a nonnegative integer $d$, a Hermitian metric $p$ on $\twistSheaf{d}\to P^n$ is {\emph{globalizable}}
    if there exists a family $\{a_{\alpha\beta}\}_{|\alpha|=|\beta|=d}$ of complex constants doubly indexed by multiindices $\alpha$ and $\beta$ of length $d$ such that
\[
p(s,\conj{s}) = \frac{|s|^2}{\sum_{|\alpha| = |\beta| = d} a_{\alpha\beta} \Phi^\alpha \conj{\Phi}^\beta}
\]
Here $\Phi^\alpha \conj{\Phi}^\beta \defas \Phi_0^{\alpha_0} \Phi_1^{\alpha_1} \cdots \Phi_n^{\alpha_n}\conj{\Phi_0}^{\beta_0} \conj{\Phi_1}^{\beta_1}\cdots \conj{\Phi_n}^{\beta_n}$.
Catlin-D'Angelo introduced this concept of gloablizability of Hermtitian metric in \cite{CatlinDAngelo99}.

Normalize the Fubini-Study volume form $\Omega$ such that $\int_{\P^n} \Omega=1$.
Let $m$ be a nonnegative integer.
Equip the complex vector space $H^0(\P^n, \twistSheaf{m + d})$ of global holomorphic sections of $\twistSheaf{m + d}$ with an inner product
\begin{equation} \label{eq: DefineInnerProductOnVectorSpaceOfHolomorphicPolynomials}
(s_1,s_2) \defas \int_{\P^n} \frac{s_1\conj{s_2}}{r^mp} \, \Omega
\end{equation}
The induced norm is given by $\norm{s} =\sqrt{(s,s)}$.
Associate to $r^m p$ a sesquilinear form $\K_{r^mp} : H^0(\P^n, \twistSheaf{m + d})\times H^0(\P^n, \twistSheaf{m + d}) \to \C$ given by
\begin{equation} \label{eq: DefineAssociatedIntegralOperator}
\K_{r^mp}(s_1, s_2) = \int_{\P^n} \! \int_{\P^n} \! \frac{(r^mp)(\xy)\conj{s_2(x)} s_1(y)}{(r^mp)(\xx)(r^mp)(\yy)} \, \Omega(y) \Omega(X)
\end{equation}
In this article, we show that $\K_{r^mp}$ is eventually positive definite.
\begin{theorem}
Let $d$ be a nonnegative integer and $p$ be a Hermitian metric on $\twistSheaf{d} \to \P^n$.
If $p$ is globalizable, then for $m$ sufficiently large, the following asymptotic holds uniformly for $s\in H^0(\P^n, \twistSheaf{m + d})$:
\begin{equation} \label{eq: ComputeAsymptoticsOfEventuallyPositiveDefiniteKernel}
\K_{r^mp}(s,s) = \left\{\frac{n!}{m^n}+ \bigO\left(\frac{(\log m)^{n+2}}{m^{n+1}}\right)\right\} \norm{s}^2
\end{equation}
\end{theorem}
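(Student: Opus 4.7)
The plan is to show that the integrand of $\K_{r^mp}(s,s)$ concentrates near the diagonal $y=x$ with Gaussian profile of width $O(m^{-1/2})$, reducing the double integral to $\norm{s}^2$ times the mass of a Gaussian on $\C^n$, which accounts for the leading factor $n!/m^n$. The concentration is quantified by the diastasis of $r^mp$: setting
\[
\diastasis(x,y) \defas \log\frac{(r^mp)(\xx)\,(r^mp)(\yy)}{|(r^mp)(\xy)|^2},
\]
linearity of the logarithm gives $\diastasis = m\diastasis_r + \diastasis_p$, and
\[
\frac{|(r^mp)(\xy)|}{\sqrt{(r^mp)(\xx)\,(r^mp)(\yy)}} = e^{-\diastasis(x,y)/2}.
\]
The Fubini-Study diastasis $\diastasis_r$ is comparable to the squared geodesic distance, and globalizability of $p$ yields uniform local and global constants $\BoundForLocDi,\BoundForGloDi$ bounding $|\diastasis_p|$, so for $m$ large the kernel decays rapidly off the diagonal.

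I would organize the argument in three steps. \textbf{Step 1 (localize):} for each $x\in\P^n$, split the inner $y$-integral into the piece over the affine ball $\AffineBall{x}{\rho}$ of diastatic radius $\rho = C(\log m)/m$, with $C$ chosen in terms of $\BoundForGloDi$, and its complement. On the complement $e^{-\diastasis/2}\leq e^{\BoundForGloDi/2}m^{-C/2}$, and a Schur-test bound makes the contribution to $\K_{r^mp}(s,s)$ of size $O(m^{-N}\norm{s}^2)$ for any $N$. \textbf{Step 2 (expand):} on $\AffineBall{x}{\rho}$ pass to rescaled canonical coordinates $w\defas\sqrt{m}(\canCoord{y}-\canCoord{x})$ and Taylor-expand $m\diastasis_r$ about the diagonal; the leading term is a Hermitian quadratic form in $w$ yielding the Gaussian $e^{-|w|^2}$, and the cubic remainder is $O(m\rho^{3/2})=O((\log m)^{3/2}m^{-1/2})$. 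Expand $\diastasis_p$ and the Fubini-Study form $\Omega$ in the same coordinates, with remainders controlled uniformly by $\BoundForLocDi$. \textbf{Step 3 (extract $s$):} for $m$ large, the curvature of $r^mp$ is positive, so $|s|^2/(r^mp)$ is plurisubharmonic; a sub-mean-value inequality on $\AffineBall{x}{\rho}$ combined with the bounded ratio $(r^mp)(\yy)/(r^mp)(\xx) = 1+O(\log m)$ lets us replace $s(y)$ by $s(x)$ up to a relative $O(\sqrt{\rho})$ error. Then the Gaussian integral $\int_{\C^n}e^{-|w|^2}\,dV(w)=\pi^n$ and the normalization $\int_{\P^n}\Omega=1$ produce the constant $n!/m^n$.

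The main obstacle will be Step 3: obtaining a quantitative pointwise estimate for the holomorphic section $s$ in terms of its weighted $L^2$-norm, uniform in both $x\in\P^n$ and $s\in H^0(\P^n,\twistSheaf{m+d})$. The standard sub-mean-value argument is cleanly applicable only when the potential has globally bounded Hessian; for a general globalizable $p$, the error of the plurisubharmonic comparison must be tracked through $\BoundForLocDi$ and $\BoundForGloDi$, which introduces logarithmic losses proportional to $m\rho=\log m$. Carefully combining these with the cubic remainder from Step 2 and the $\rho^n$-volume factor in $\AffineBall{x}{\rho}$ should produce the stated bound $O((\log m)^{n+2}/m^{n+1})$; obtaining precisely the power $n+2$ of $\log m$, rather than a larger one, is the technical heart of the argument.
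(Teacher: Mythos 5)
Your localization step (Step~1) and off-diagonal Schur bound match the paper's strategy, but the treatment of the near-diagonal integral (Steps~2 and~3) departs from the paper in a way that introduces a genuine gap. The paper's decisive observation is that, over the affine ball, the integrand of the dominant term factors as a \emph{radially symmetric} function of the diastasis times a function \emph{holomorphic in $y$}, namely
\[
\left[\frac{|r(\xy)|^2}{r(\xx)r(\yy)}\right]^m \cdot \frac{\conj{s(x)}\,s(y)}{(r^mp)(\yx)},
\]
and then invokes the mean value property of harmonic functions (Lemma~\ref{lem: ComputeRotationallyInvariantIntegrand}) to replace the holomorphic factor \emph{exactly} by its value at the center $y=x$, with no error at all. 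No sub-mean-value inequality, no pointwise control of $|s|$, and no Taylor expansion of $s$ are required; the only error terms come from the $p$-correction $\expr{B}$ (controlled by Lemma~\ref{lem: BoundLocallyDiastasisOfP}) and the off-diagonal tail $\expr{C}$.

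By contrast, your Step~3 proposes to replace $s(y)$ by $s(x)$ pointwise, invoking a sub-mean-value inequality and claiming a relative $\bigO(\sqrt{\rho})$ error. This is where the argument would fail. First, the stated relative error $\bigO(\sqrt{\rho}) = \bigO\bigl((\log m)^{1/2}m^{-1/2}\bigr)$ is much larger than the target relative error $\bigO\bigl((\log m)^{n+2}/m\bigr)$, so as written you cannot reach \eqref{eq: ComputeAsymptoticsOfEventuallyPositiveDefiniteKernel}. Second, a sub-mean-value bound gives a one-sided estimate on $|s|^2/(r^mp)$; it does not produce the equality you need when passing from the double integral to $\norm{s}^2$. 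To salvage the approach you would have to Taylor-expand $s$ about $x$ and observe that the linear (and odd-order) terms integrate to zero against the radially symmetric kernel by symmetry — but then controlling the quadratic remainder of $s$ uniformly in $s$ and $x$ would require exactly the kind of peak-section or pointwise $L^2$-to-$L^\infty$ estimates you flag as the technical heart, and would still produce worse logarithmic losses than stated. Recognizing that the entire holomorphic-in-$y$ factor $\conj{s(x)}s(y)/(r^mp)(\yx)$ (not just $s(y)$) integrates exactly to its central value is what collapses this difficulty; without it, the proposed Step~3 has a gap that cannot be repaired at the level of crude pointwise estimates.

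One smaller remark: your pre-rescaling of the $p$-correction into a Gaussian Taylor expansion (Step~2) is heavier machinery than needed; the paper simply writes the kernel ratio as $1 + \bigl(|p(\xy)|^2/(p(\xx)p(\yy)) - 1\bigr)$, bounds the bracket by $\BoundForLocDi\diastasis(x,y)^2$ via Lemma~\ref{lem: BoundLocallyDiastasisOfP}, and absorbs the resulting term into $\expr{B}$ with the Cauchy--Schwarz estimate of Lemma~\ref{lem: ApproximateError}. Your global constant $\BoundForGloDi$ plays the same role as the paper's $M_p$ in controlling the off-diagonal term, so Step~1 is essentially aligned.
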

The author is aware that
    the above result (in fact a stronger asymptotic without the $(\log m)^{n+2}$ factor) would follow from pp. 313-314 of \cite{Varolin08},
    but is unable to follow the argument provided there.

In our proof, we show that the double integral \eqref{eq: TheDoubleIntegral} which represents this integral operator concentrates in a tubular neighbourhood of the diagonal with radius $(\log m)/\sqrt{m}$.
This concentration result is inspired by the asymptotically concentration of the Bergman kernel along the diagonal.
Our choice of $(\log m)/\sqrt{m}$ as radius is influenced by Tian,
    who used the same radius to construct peak sections in \cite{Tian90} to prove the convergence of Bergman metrics.

\section{Some lemmas} \label{sec: LemmasToProveAsymptotic}

We use the following notation and conventions throughout this article.
Unless stated otherwise, asymptotics in this article are taken in an integer $m$ that approaches infinity.
Following Knuth \cite{Knuth92}, the {\emph{Iverson bracket}} of a proposition $\tau$ is the quantity
\[
[\tau] \defas \begin{cases}
1 &{\text{if $\tau$ is true}} \\
0 &{\text{if $\tau$ is false}}
\end{cases}
\]
    For example, the characteristic function of a subset $E$ of $\P^n$ is given by $\chi_E(y)= [y\in E]$.
    Another example is the Kronecker delta, which is given by $\delta_{ij}=[i=j]$.

Recall from the introduction that $r \defas |\Phi_0|^2 + \cdots +|\Phi_n|^2$ for some chosen basis $(\Phi_0, \ldots, \Phi_n)$ of $H^0(\P^n, \twistSheaf{1})$.
This globalizable metric $r$ can be polarized to yield a metric on $\P^n$.
This metric $\diastasis: \P^n\times \P^n \to [0,\infty]$ is given by
\begin{align*}
\diastasis(x, y) &= \left(\frac{r(\xx)r(\yy)}{|r(\xy)|^2} - 1\right)^{1/2} \\
    &= \left(\frac{(|\Phi_0(x)|^2 + \cdots + |\Phi_n(x)|^2) (|\Phi_0(y)|^2 + \cdots + |\Phi_n(y)|^2)}
              {|\Phi_0(x)\conj{\Phi_0(y)} + \cdots + \Phi_n(x)\conj{\Phi_n(y)}|^2} - 1\right)^{1/2}
\end{align*}
For each point $x$ of $\P^n$, there exists a {\emph{canonical coordinate}} $z$ centered at $x$ such that
\begin{align*}
\diastasis([1:0], [1:z])
&=  \left(\frac{(1^2 + 0^2 + \cdots + 0^2)(1^2 + |z_1|^2 + \cdots + |z_n|^2)}
               {|1\cdot \conj{1} + 0\cdot \conj{z_1} + \cdots + 0\cdot \conj{z_n}|^2} - 1\right)^{1/2}\\
&= |z|
\end{align*}
For example, if $x$ is a point of $\P^n$, then the subset $\AffineBall{x}{\infty}$ is biholomorphic to $\C^n$.

\begin{lemma} \label{lem: BoundLocallyDiastasisOfP}
Let $p$ be a globalizable metric on $\twistSheaf{d}\to \P^n$.
There exists a positive constant $\BoundForLocDi$ such that
\[
\Modulus{
                \left[\frac{|p(\xy)|^2}{p(\xx)p(\yy)}\right]^{1/2}-\left[\frac{p(\xx)p(\yy)}{|p(\xy)|^2}\right]^{1/2}
            } \le  \BoundForLocDi \diastasis(x,y)^2
\]
\end{lemma}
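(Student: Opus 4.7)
The plan is to reduce the desired bound to a Taylor expansion of a Calabi-type diastasis attached to $\log H$, where $H(\xy) \defas \sum_{|\alpha|=|\beta|=d} a_{\alpha\beta}\,\Phi^{\alpha}(x)\conj{\Phi}^{\beta}(y)$ is the polarization of the polynomial provided by the globalizability hypothesis. In local frames $p(\xy) = s(x)\conj{s(y)}/H(\xy)$, so the scaleless ratio inside the lemma simplifies to
\[
\frac{|p(\xy)|^{2}}{p(\xx)\,p(\yy)} = \frac{H(\xx)\,H(\yy)}{|H(\xy)|^{2}}.
\]
Since $H(\xx)>0$ by globalizability, this ratio is a continuous scalar function on the open neighborhood of the diagonal on which $H(\xy)\ne 0$. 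Setting $\phi(x,y)\defas \log H(\xx) + \log H(\yy) - \log H(\xy) - \log H(\yx)$, the quantity in the lemma equals $|2\sinh(\phi/2)|$. Because $|\sinh t|\le e^{|t|}|t|$, and since the estimate is local in nature (cf.\ the title of this lemma), it suffices to establish a uniform bound $|\phi(x,y)|\le C_{1}\diastasis(x,y)^{2}$ on a tubular neighborhood of the diagonal on which $\phi$ remains bounded.

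The central step is a telescoping Taylor expansion around the diagonal. Fix $x_{0}\in\P^{n}$, introduce a canonical coordinate $z$ centered at $x_{0}$, and set $u\defas z(x)$, $v\defas z(y)$, $w\defas v-u$. A direct computation in the corresponding affine chart (using $r(\xy)=1+u\cdot\conj{v}$) shows that
\[
\diastasis(x,y)^{2} = \frac{|w|^{2}}{|1+u\cdot\conj{v}|^{2}},
\]
so $|w|^{2}\asymp\diastasis(x,y)^{2}$ uniformly on any small compact neighborhood of $(x_{0},x_{0})$. Now expand each of the four summands of $\phi(u,u+w)$ as a power series in $(w,\conj{w})$ about the origin. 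The telescoping cancellations proceed as follows: the four constant terms cancel; the purely holomorphic linear terms $w^{j}$ from $\log H(v,\conj{v})$ kill those from $\log H(v,\conj{u})$; the purely antiholomorphic terms $\conj{w}^{k}$ from $\log H(v,\conj{v})$ kill those from $\log H(u,\conj{v})$; and likewise the purely holomorphic and antiholomorphic second-order terms $w^{j}w^{k}$ and $\conj{w}^{j}\conj{w}^{k}$ cancel by the same pairings. Only the mixed Wirtinger derivative survives:
\[
\phi(u,u+w)=\sum_{j,k}\bigl(\partial_{j}\partial_{\conj{k}}\log H\bigr)(u,\conj{u})\, w^{j}\conj{w}^{k}+O(|w|^{3}).
\]

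Because $\partial_{j}\partial_{\conj{k}}\log H$ is smooth on the compact space $\P^{n}$, its operator norm is uniformly bounded by some $M_{p}>0$ independent of $x_{0}$. Combined with $|w|^{2}\asymp\diastasis(x,y)^{2}$, this yields $|\phi(x,y)|\le C\,\diastasis(x,y)^{2}$ on a uniform tubular neighborhood of the diagonal, and then $|2\sinh(\phi/2)|\le C_{p}\diastasis(x,y)^{2}$ follows by the bound on $\sinh$ above. Covering $\P^{n}$ by finitely many canonical coordinate charts produces a single constant $C_{p}$ that works. The main potential obstacle is the clean bookkeeping of the first-order cancellations; however the telescoping is forced by the holomorphic/antiholomorphic split of $H(\xy)$ into its two arguments, so once the four summands are arranged symmetrically the cancellation is mechanical, and the remainder of the argument reduces to smoothness plus compactness of $\P^{n}$.
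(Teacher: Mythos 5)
Your proof follows essentially the same route as the paper's: work in a canonical coordinate centered on the diagonal, observe that the Taylor data of order zero and one of the symmetrized quantity vanish, and conclude a quadratic bound in $\diastasis(x,y)$ near the diagonal, then appeal to compactness for uniformity. Your reformulation via $\phi$ and $|2\sinh(\phi/2)|$ is a clean way to organize the telescoping cancellations and is equivalent in content to the paper's logarithmic differentiation of $G$, which yields $\partial G(0)=\conj{\partial}G(0)=0$ and then invokes Taylor's theorem for $G - G^{-1}$ directly.

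Two corrections. First, the displayed identity $\diastasis(x,y)^2 = |w|^2/|1+u\cdot\conj{v}|^2$ holds only when $n=1$. By Lagrange's identity the numerator $r(\xx)r(\yy) - |r(\xy)|^2$ equals $|w|^2 + \bigl(|u|^2|v|^2 - |u\cdot\conj{v}|^2\bigr)$, and the second term is nonzero in general once $n\ge 2$. The comparability $|w|^2 \asymp \diastasis(x,y)^2$ that you actually rely on still holds uniformly for $(x,y)$ near the diagonal, since $u_jv_k - u_kv_j = u_jw_k - u_kw_j$ shows the extra term is $\bigO(|u|^2|w|^2)$, so your argument survives; but the claimed exact equality should be dropped. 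Second, and more substantively, your argument only establishes the inequality on a tubular neighborhood $\{\diastasis(x,y)<\delta\}$: you assert that it suffices to bound there on the grounds that the estimate is local, but you give no reduction, and the lemma as stated is global in $(x,y)$. The paper closes this by compactness: for $\diastasis(x,y)\ge\delta$ the left-hand side is uniformly bounded, hence trivially at most $C''\diastasis(x,y)^2$ with $C''\defas\delta^{-2}\sup$ of the left-hand side over $\P^n\times\P^n$, and one sets $\BoundForLocDi\defas\max\{C',C''\}$. You should add this step (or restrict the lemma to a neighborhood of the diagonal, which is all that is used in the estimate of $\expr{B}$).
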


\begin{proof}
Fix a point $x$ of $\P^n$.
Define a function $G : \P^n \to \PosReals$ by
\begin{equation} \label{eq: definition of G}
G(y) = \left[\frac{|p(\xy)|^2}{p(\xx)p(\yy)}\right]^{1/2}
\end{equation}
Choose a trivialization of $\twistSheaf{d}$.
Choose a canonical coordinate $z$ centered at $x$.
In this trivialization and coordinate,
\[
2\log G(z) = \log p(z,\conj{0}) + \log p(0,\conj{z}) - \log p(z,\conj{z}) - \log p(0,\conj{0})
\]
Taking the holomorphic derivative,
\begin{align*}
2\frac{\partial G(z)}{G(z)}
&= \frac{\partial p(\zo) }{p(\zo)} + 0 - \frac{\partial p(\zz) }{p(\zz)} - 0
\end{align*}
Noting that $G(0) = 1$, evaluation at $z=0$ gives $\partial G(0) = 0$.
The chain rule  $\partial (G^{-1}) = - \partial G / G^2$ implies $\partial (G^{-1})(0) = 0$.
Since $\conj{G} = G$, we also have the vanishing of the antiholomophic derivatives, namely $\conj{\partial} G(0) = \conj{\partial}(G^{-1})(0) = 0$.

Noting that $G(0)-(G^{-1})(0) = 0$, the Taylor theorem gives local functions $h_{\alpha\beta}$,
    say defined whenever $|z| < \delta$ for some small $\delta$, such that
    $G(z) - G(z)^{-1} = \sum_{|\alpha|+|\beta|=2} h_{\alpha\beta}(z) z^{\alpha} \conj{z}^{\beta}$.
Hence, if $|z| \lt \delta$, then $|G(z) - G(z)^{-1}| \le C' |z|^2$ where $C' \defas \sum_{|\alpha|+|\beta|=2} \left(\sup_{|z| \lt \delta} |h_{\alpha\beta}(z)|\right)$
Recall \eqref{eq: definition of G} for the definition of $G$, this says that, if $\diastasis(x,y) < \delta$, then
\[
\Modulus{
                \left[\frac{|p(\xy)|^2}{p(\xx)p(\yy)}\right]^{1/2}-\left[\frac{p(\xx)p(\yy)}{|p(\xy)|^2}\right]^{1/2}
            } \le  C' \diastasis(x,y)^2
\]
If $\diastasis(x,y) \ge \delta$, then
\[
\Modulus{
                \left[\frac{|p(\xy)|^2}{p(\xx)p(\yy)}\right]^{1/2}-\left[\frac{p(\xx)p(\yy)}{|p(\xy)|^2}\right]^{1/2}
            } \le  C'' \diastasis(x,y)^2
\]
where
\[
C'' \defas \delta^{-2 } \sup_{x,y\in \P^n} \Modulus{
                \left[\frac{|p(\xy)|^2}{p(\xx)p(\yy)}\right]^{1/2}-\left[\frac{p(\xx)p(\yy)}{|p(\xy)|^2}\right]^{1/2}
            }
\]
Hence we obtain the desired inequality bysetting $ \BoundForLocDi \defas \max \left\{
C', C''
            \right\}
$.
\end{proof}

Let $V$ denote the Lebesgue measure on $\C^n$.
Equip the unit sphere $S^{2n-1}$ of $\C^n$ with its Haar measure, namely the unique rotationally invariant Borel probability measure.
The integral of a Borel measurable function $f: \C^n\to \C$ can be transformed into polar coordinates (see p.6 in \cite{ABR00}):
\begin{equation} \label{eq: TransformToPolarCoordinates}
\int_{\C^n} \! f(z) \, \frac{n! \, \d V(z)}{\pi^n} = 2n \int_0^\infty r^{2n-1}\int_{S^{2n-1}} \! f(r\xi) \, \d\xi \d r
\end{equation}
If $g:\PosReals \to \C$ is Borel measurable, then \eqref{eq: TransformToPolarCoordinates} simplifies to
\begin{equation} \label{eq: TransformRadiallySymmetricFunctionToPolarCoordinates}
\int_{\C^n} \! g(|z|) \, \frac{n! \, \d V(z)}{\pi^n} = 2n \int_0^\infty r^{2n-1} g(r) \, \d r
\end{equation}

\begin{lemma} If a function $R: \PosInt \to \PosReals$ satisfies $\lim_{m\approaches \infty} R(m) =0$,
                            then the following asymptotics hold uniformly for $x\in \P^n$:
\begin{align}
\int_{\P^n} \!\left[\frac{|r(\xy)|^2}{r(\xx)r(\yy)}\right]^m \,\Omega(y) &= \frac{n!}{m^n} + \bigO\left(\frac{1}{m^{n+1}}\right) \label{eq: TotalIntegralOfRM}\\
\int_{\{y:\, \diastasis(x,y) \ge R(m)\} } \!\left[\frac{|r(\xy)|^2}{r(\xx)r(\yy)}\right]^m \,\Omega(y)
&= \bigO\left(e^{-\frac{m}{2}R(m)^2}\right)\label{eq: OffDiagonalIntegralOfRM} \\
\int_{ \AffineBall{x}{R(m)} } \! \,\Omega(y)  &= \bigO(R(m)^{2n})\label{eq: NearDiagonalVolume}
\end{align}
\end{lemma}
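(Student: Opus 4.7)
My plan is to reduce each of the three asymptotics to a concrete computation on $\C^n$ by exploiting the symmetry of the setup. The key identity is $\frac{|r(\xy)|^2}{r(\xx)r(\yy)} = (1 + \diastasis(x,y)^2)^{-1}$, which follows directly from the definition of $\diastasis$; so in a canonical coordinate $z$ centered at $x$ (in which $\diastasis(x,[1:z]) = |z|$) the integrand of \eqref{eq: TotalIntegralOfRM} and \eqref{eq: OffDiagonalIntegralOfRM} becomes simply $(1+|z|^2)^{-m}$. The positive Hermitian form underlying $r$ has a unitary group that acts transitively on $\P^n$ and preserves both $\diastasis$ and $\Omega$; translating the base point to $[1:0:\cdots:0]$, the normalized volume form becomes $\Omega = \frac{n!}{\pi^n}(1+|z|^2)^{-n-1}\, \d V(z)$ on the affine chart $\C^n$, where the constant $n!/\pi^n$ is dictated by $\int_{\P^n}\Omega = 1$. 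Since the reduced integrands no longer depend on $x$, the uniformity in $x$ is automatic.

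For \eqref{eq: TotalIntegralOfRM}, the integral becomes $\frac{n!}{\pi^n}\int_{\C^n}(1+|z|^2)^{-m-n-1}\,\d V$. Applying \eqref{eq: TransformRadiallySymmetricFunctionToPolarCoordinates} with $g(t) = (1+t^2)^{-m-n-1}$ and substituting $u = r^2$ reduces this to the Euler beta integral
\[
n\int_0^\infty u^{n-1}(1+u)^{-m-n-1}\,\d u = \frac{n!\, m!}{(m+n)!} = \frac{n!}{(m+1)(m+2)\cdots(m+n)}.
\]
Expanding $(m+1)(m+2)\cdots(m+n) = m^n + \binom{n+1}{2}m^{n-1} + \bigO(m^{n-2})$ and taking reciprocals then yields the claimed asymptotic $\frac{n!}{m^n} + \bigO(m^{-n-1})$.

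For \eqref{eq: OffDiagonalIntegralOfRM}, on $\{|z|\ge R(m)\}$ I pull the monotone bound $(1+|z|^2)^{-m} \le (1+R(m)^2)^{-m}$ outside the integral and dominate the remaining integral of $\Omega$ trivially by $\int_{\P^n}\Omega = 1$. Since $R(m)\approaches 0$, eventually $R(m)^2 \le 1$, whence the elementary inequality $\log(1+t) \ge t/2$ on $[0,1]$ gives $(1+R(m)^2)^{-m} \le e^{-mR(m)^2/2}$, the required decay. For \eqref{eq: NearDiagonalVolume}, bounding $\frac{n!}{\pi^n}(1+|z|^2)^{-n-1} \le \frac{n!}{\pi^n}$ on the Euclidean ball together with $\vol(\EuBall{R(m)}) = \pi^n R(m)^{2n}/n!$ yields the $\bigO(R(m)^{2n})$ estimate. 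No substantive obstacle is anticipated; the main content lies in the initial reduction to canonical coordinates and the beta-function identification in the first part.
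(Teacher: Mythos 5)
Your proof is correct and follows essentially the same route as the paper: reduce to the canonical coordinate chart via the transitive unitary symmetry, convert to polar coordinates, bound the integrand by its value at radius $R(m)$ for the off-diagonal estimate, and bound $\Omega$ by the flat volume form on the near-diagonal ball. The only cosmetic differences are that you identify the radial integral as an Euler beta integral and fix the normalization constant directly, whereas the paper carries an unknown constant $c$ and determines $c=1$ by specializing to $m=0$.
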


\begin{proof}
Choose a canonical coordinate $z$ centered at $x$.
Recall that $\Omega$ is a normalization of the Fubini-Study volume form, hence there exists a constant $c \gt 0$ such that
\[
\Omega(z) = c\frac{n! \,\d V(z)}{\pi^n(1+|z|^2)^{n+1}}
\]
Hence
\begin{align}
\int_{\P^n} \left[\frac{|r(\xy)|^2}{r(\xx)r(\yy)}\right]^m\, \Omega(y) &= \int_{\C^n} \frac{1}{(1+|z|^2)^m} \, c\frac{n!\, \d V (z)}{\pi^n(1+|z|^2)^{n+1}} \notag \\
&= c\int_{\C^n} \frac{1}{(1+|z|^2)^{m+n+1}} \frac{n!\, \d V(z)}{\pi^n} \label{eq: alpha}
\end{align}
By polar coordinate formula \eqref{eq: TransformRadiallySymmetricFunctionToPolarCoordinates}:
\begin{align}
\int_{\C^n} \frac{1}{(1+|z|^2)^{m+n+1}} \frac{n!\, \d V(z)}{\pi^n} &=  2n \int_0^\infty r^{2n-1} \frac{1}{(1+r^2)^{m+n+1}}  \d r \notag \\
                                                                                                &=    \frac{n!}{(m+n)(m+n-1)\cdots(m+1)} \label{eq: beta}
\end{align}
Combine \eqref{eq: alpha} and \eqref{eq: beta} to obtain
\[
\int_{\P^n} \left[\frac{|r(\xy)|^2}{r(\xx)r(\yy)}\right]^m\, \Omega(y) = c\frac{n!}{(m+n)(m+n-1)\cdots(m+1)}
\]
In particular, when $m=0$, this becomes $\int_{\P^n} \Omega = c$. Hence $c= 1$, by our normalization of $\Omega$. This proves \eqref{eq: TotalIntegralOfRM}:
\begin{align*}
\int_{\P^n} \left[\frac{|r(\xy)|^2}{r(\xx)r(\yy)}\right]^m\, \Omega(y) &= \frac{n!}{(m+n)(m+n-1)\cdots(m+1)} \\
&= \frac{n!}{m^n} + \bigO\left(\frac{1}{m^{n+1}}\right)
\end{align*}

Next we show \eqref{eq: OffDiagonalIntegralOfRM}.
Note that
\[
\left[\frac{|r(\xy)|^2}{r(\xx)r(\yy)}\right]^m = \frac{1}{(1+\diastasis(x,y)^2)^m}
\]
Hence
\begin{align*}
\int_{\{y:\, \diastasis(x,y) \ge R(m)\} } \!\left[\frac{|r(\xy)|^2}{r(\xx)r(\yy)}\right]^m \,\Omega(y)
&\le  \int_{\{y:\, \diastasis(x,y) \ge R(m)\} } \! \frac{1}{(1+\diastasis(x,y)^2)^m} \,\Omega(y) \\
&\le  \frac{1}{(1+R(m)^2)^m} \int_{\{y:\, \diastasis(x,y) \ge R(m)\} } \!  \,\Omega(y) \\
&\le \frac{1}{(1+R(m)^2)^m} \int_{\P^n}  \! \,\Omega(y) \\
&= \frac{1}{(1+R(m)^2)^m}
\end{align*}
The inequality $1+\epsilon \ge e^{\epsilon / 2}$ holds for small $\epsilon \gt 0$.
By the assumption $\lim_{m\approaches \infty} R(m) =0$, hence
\[
\frac{1}{1+R(m)^2} \le \frac{1}{e^{R(m)^2/2 }}
\]
Hence \eqref{eq: OffDiagonalIntegralOfRM} follows.

Finally we prove \eqref{eq: NearDiagonalVolume}.
In the canonical coordinate $z$, the volume form $\Omega$ has an upper bound
\begin{align*}
 \Omega
&=  \frac{n!\, \d V (z)}{\pi^n(1+|z|^2)^{n+1}} \\
&\le \frac{n!\, \d V (z)}{\pi^n}
\end{align*}
Integrating, the polar coordinate formula \eqref{eq: TransformRadiallySymmetricFunctionToPolarCoordinates} gives
$\int_{ \AffineBall{x}{R(m)} } \! \,\Omega(y) \le 2n\int_0^{R(m)} \! r^{2n-1} \, \d r = R(m)^{2n} $.
Hence \eqref{eq: NearDiagonalVolume} follows.
\end{proof}

\begin{lemma} \label{lem: ApproximateError}
Let $R_0\ge 0$.
If $q: \P^n\times \P^n \to \PosReals$ and $g: \P^n\to\PosReals$ are continuous functions, then
\begin{multline} \label{eq: inequality in ApproximateError}
\int_{\P^n} \int_{\AffineBall{x}{R_0}} \! q(x,y) g(x) g(y) \, \Omega(y) \Omega(x) \\
\le \sqrt{\sup_{x\in \P^n}\int_{\AffineBall{x}{R_0}} \! q(x,y)^2 \,\Omega(y)}  \sqrt{ \int_{\AffineBall{\bullet}{R_0}} \Omega} \int_{\P^n} \! g^2 \,\Omega
\end{multline}
\end{lemma}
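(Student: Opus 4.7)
The plan is to pay for the factor $q(x,y) g(x) g(y)$ by two applications of the Cauchy--Schwarz inequality, one for the inner integral and one for the outer integral. First I would freeze $x$ and apply Cauchy--Schwarz to the inner integral in $y$ to split $q$ from $g(y)$:
\[
\int_{\AffineBall{x}{R_0}} q(x,y) g(y) \, \Omega(y)
\le \sqrt{\int_{\AffineBall{x}{R_0}} q(x,y)^2 \, \Omega(y)} \,
     \sqrt{\int_{\AffineBall{x}{R_0}} g(y)^2 \, \Omega(y)},
\]
then bound the first square root by its supremum in $x$, pulling that constant out of the remaining $x$-integral.

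Next I would attack the resulting expression $\int_{\P^n} g(x) \sqrt{\int_{\AffineBall{x}{R_0}} g(y)^2 \, \Omega(y)} \, \Omega(x)$ by a second Cauchy--Schwarz, this time in $x$, producing
\[
\sqrt{\int_{\P^n} g(x)^2 \, \Omega(x)} \,
\sqrt{\int_{\P^n} \int_{\AffineBall{x}{R_0}} g(y)^2 \, \Omega(y) \, \Omega(x)}.
\]
To simplify the double integral on the right I would invoke the symmetry $\diastasis(x,y) = \diastasis(y,x)$, visible directly from the defining formula in the introduction, so that the Iverson bracket $[\diastasis(x,y) < R_0]$ equals $[\diastasis(y,x) < R_0]$. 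Fubini then swaps the order of integration:
\[
\int_{\P^n} \int_{\AffineBall{x}{R_0}} g(y)^2 \, \Omega(y) \, \Omega(x)
= \int_{\P^n} g(y)^2 \left( \int_{\AffineBall{y}{R_0}} \Omega(x) \right) \Omega(y),
\]
and pulling out the supremum over $y$ of $\int_{\AffineBall{y}{R_0}} \Omega$ (this is the quantity denoted $\int_{\AffineBall{\bullet}{R_0}} \Omega$ in the statement) leaves another factor of $\int_{\P^n} g^2 \, \Omega$. Assembling the two square roots involving $g^2$ into the single factor $\int_{\P^n} g^2 \, \Omega$ yields exactly the desired inequality.

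There is no real obstacle here beyond bookkeeping: the argument is two Cauchy--Schwarz inequalities glued by Fubini, and the only point that needs attention is the symmetry of $\diastasis$ in its two arguments, without which the Fubini step would not reduce the double integral of $g(y)^2$ back to a single integral of $g^2$ against $\Omega$.
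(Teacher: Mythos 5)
Your proposal is correct and follows essentially the same route as the paper: Cauchy--Schwarz in $y$ with the sup over $x$ pulled out, Cauchy--Schwarz in $x$, and a Fubini swap to evaluate $\int_{\P^n} \int_{\AffineBall{x}{R_0}} g(y)^2 \, \Omega(y)\,\Omega(x)$. The one small difference is that you bound the inner $x$-integral by its supremum over $y$, whereas the paper observes that $\int_{\AffineBall{y}{R_0}} \Omega$ is actually independent of $y$ (by homogeneity of $\P^n$), which is why the clean notation $\int_{\AffineBall{\bullet}{R_0}} \Omega$ makes sense; your explicit appeal to the symmetry of $\diastasis$ is a correct and welcome justification of the Fubini step.
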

    For any two points $x$ and $x'$ on $\P^n$,
        the integrals $\int_{\AffineBall{x}{R_0}} \Omega$ and $\int_{\AffineBall{x'}{R_0}} \Omega$ are equal.
Let $\int_{\AffineBall{\bullet}{R_0}} \Omega$ denote this particular value.

\begin{proof}
For convenience, we suppress the integrand of \eqref{eq: inequality in ApproximateError} in the notation. That is to say, when a single or double integral appears without integrand, the reader understands that we refer respectively to the inner or double integral on the lefthand side of \eqref{eq: inequality in ApproximateError}.

Suppose $x$ is a point on $\P^n$.
By the Schwarz inequality,
\begin{align*}
&\phantom{=}\int_{\AffineBall{x}{R_0}} \! q(x,y) g(x) g(y) \, \Omega(y) \\
&= g(x)\int_{\AffineBall{x}{R_0}} \! q(x,y)  g(y) \, \Omega(y) \\
&\le g(x) \sqrt{\int_{\AffineBall{x}{R_0}} q(x,y)^2 \, \Omega(y)} \sqrt{\int_{\AffineBall{x}{R_0}} g(y)^2 \, \Omega(y) } \\
&\le g(x) \sqrt{\sup_{x\in \P^n}\int_{\AffineBall{x}{R_0}} \! q(x,y)^2 \,\Omega(y)}\sqrt{\int_{\AffineBall{x}{R_0}} g(y)^2 \, \Omega(y) }
\end{align*}
Integrating with respect to $x$,
\begin{align*}
&\phantom{=}\int_{\P^n} \int_{\AffineBall{x}{R_0}} \! q(x,y) g(x) g(y) \, \Omega(y) \Omega(x) \\
&\le \int_{\P^n} \! g(x) \sqrt{\sup_{x\in \P^n}\int_{\AffineBall{x}{R_0}} \! q(x,y)^2 \,\Omega(y)}\sqrt{\int_{\AffineBall{x}{R_0}} g(y)^2 \, \Omega(y) } \, \Omega(x) \\
&= \sqrt{\sup_{x\in \P^n}\int_{\AffineBall{x}{R_0}} \! q(x,y)^2 \,\Omega(y)} \int_{\P^n} \! g(x) \sqrt{\int_{\AffineBall{x}{R_0}} g(y)^2 \, \Omega(y) } \, \Omega(x)
\end{align*}
Apply the Schwarz inequality again:
\begin{multline} \label{eq: temp estimate in ApproximateError}
\int_{\P^n} \int_{\AffineBall{x}{R_0}} \le \sqrt{\sup_{x\in \P^n}\int_{\AffineBall{x}{R_0}} \! q(x,y)^2 \,\Omega(y)}\\
 \cdot \sqrt{\int_{\P^n} \! g(x)^2\,\Omega(x)} \sqrt{\int_{\P^n} \! \int_{\AffineBall{x}{R_0}} g(y)^2 \, \Omega(y) \Omega(x)}
\end{multline}

By the Fubini theorem, we compute using the Iverson bracket notation,
\begin{align*}
&\phantom{=}\int_{\P^n} \! \int_{\AffineBall{x}{R_0}} g(y)^2 \, \Omega(y) \Omega(x) \\
&= \int_{\P^n}\int_{\P^n} \! [\diastasis(x,y) < R_0] g(y)^2 \, \Omega(y) \Omega(x) \\
&= \int_{\P^n} \! g(y)^2 \int_{\P^n} \! [y\in \AffineBall{x}{R_0}] \, \Omega(x) \Omega(y) \\
&= \int_{\P^n} \! g(y)^2 \int_{\{x:\, \diastasis(x,y) < R_0\}} \! \, \Omega(x) \, \Omega(y) \\
&= \int_{\AffineBall{\bullet}{R_0}} \Omega \int_{\P^n} \! g(y)^2 \,\Omega(y)
\end{align*}
Hence, by \eqref{eq: temp estimate in ApproximateError},
\begin{align*}
&\phantom{=}\int_{\P^n} \! \int_{\AffineBall{x}{R_0}} g(y)^2 \, \Omega(y) \Omega(x) \\
&\le \sqrt{\sup_{x\in \P^n}\int_{\AffineBall{x}{R_0}} \! q(x,y)^2 \,\Omega(y)} \sqrt{\int_{\P^n} \! g(x)^2\,\Omega(x)}
                                \sqrt{\int_{\AffineBall{x}{R_0}} \Omega \int_{\P^n} \! g(y)^2 \,\Omega(y)} \\
&= \sqrt{\sup_{x\in \P^n}\int_{\AffineBall{x}{R_0}} \! q(x,y)^2 \,\Omega(y)}  \sqrt{ \int_{\AffineBall{x}{R_0}} \Omega} \int_{\P^n} \! g^2 \,\Omega\end{align*}
\end{proof}

Our normalization of $\Omega$ implies that $\int_{\AffineBall{x}{R_0}} \Omega \le \int_{\P^n} \Omega =1$.
Hence the above lemma has the following weaker form.

\begin{corollary} \label{cor: ApproximateWeaklyError}
Under the same conditions as in the above lemma, the following inequality holds:
\begin{multline*}
\int_{\P^n} \int_{\AffineBall{x}{R_0}} \! q(x,y) g(x) g(y) \, \Omega(y) \Omega(x) \\
\le \sqrt{\sup_{x\in \P^n}\int_{\AffineBall{x}{R_0}} \! q(x,y)^2 \,\Omega(y)}  \int_{\P^n} \! g^2 \,\Omega
\end{multline*}
\end{corollary}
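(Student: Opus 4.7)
The plan is to obtain the corollary as an immediate weakening of Lemma \ref{lem: ApproximateError}. The only ingredient beyond the lemma itself is monotonicity of the integral of the nonnegative volume form $\Omega$ on $\P^n$.

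First I would apply Lemma \ref{lem: ApproximateError} with the same $R_0$, $q$, and $g$ to obtain
\[
\int_{\P^n} \int_{\AffineBall{x}{R_0}} \! q(x,y) g(x) g(y) \, \Omega(y) \Omega(x)
\le \sqrt{\sup_{x\in \P^n}\int_{\AffineBall{x}{R_0}} \! q(x,y)^2 \,\Omega(y)}  \sqrt{ \int_{\AffineBall{\bullet}{R_0}} \Omega} \int_{\P^n} \! g^2 \,\Omega.
\]
Next, since $\AffineBall{\bullet}{R_0}\subset \P^n$ and $\Omega$ is a nonnegative Borel measure, monotonicity gives $\int_{\AffineBall{\bullet}{R_0}} \Omega \le \int_{\P^n} \Omega$. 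Invoking the normalization $\int_{\P^n}\Omega=1$ fixed in the introduction, we conclude $\sqrt{\int_{\AffineBall{\bullet}{R_0}} \Omega} \le 1$.

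Substituting this bound into the inequality above and using that all factors on the right-hand side are nonnegative yields the claimed estimate, completing the proof. There is no real obstacle here: the content of the corollary is simply the observation, already flagged in the paragraph preceding its statement, that the $\sqrt{\int_{\AffineBall{\bullet}{R_0}} \Omega}$ factor in Lemma \ref{lem: ApproximateError} may be discarded at the cost of passing to a weaker inequality.
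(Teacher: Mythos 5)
Your proof is correct and matches the paper's own argument exactly: apply Lemma~\ref{lem: ApproximateError} and then discard the factor $\sqrt{\int_{\AffineBall{\bullet}{R_0}} \Omega}$, which is at most $1$ by the normalization $\int_{\P^n}\Omega = 1$. The paper states precisely this in the sentence immediately preceding the corollary.
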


\begin{lemma} \label{lem: ComputeRotationallyInvariantIntegrand}
Let $R_0 \in [0, \infty]$ and let $x$ be a point of $\P^n$.
If $h : [0,R_0)\to \C$ is a continuous function and $f$ is holomorphic on $\AffineBall{x}{R_0}$,
    then
\[
\int_{\AffineBall{x}{R_0}} \! h(\diastasis(x,y)) f(y) \, \Omega(y) = f(x) \int_{\AffineBall{x}{R_0}} \! h(\diastasis(x,y))\, \Omega(y)
\]
\end{lemma}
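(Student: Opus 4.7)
The plan is to reduce the integral to one on $\C^n$ via a canonical coordinate centered at $x$, so that the measure becomes rotationally symmetric and the sphere mean-value property of holomorphic functions applies.

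First I would choose a canonical coordinate $z$ centered at $x$, under which $\diastasis(x,y) = |z|$ (as computed in the introduction of the paper) and under which $\AffineBall{x}{R_0}$ becomes the Euclidean ball $\{z \in \C^n : |z| < R_0\}$. In this chart the Fubini--Study form is
\[
\Omega(z) = \frac{n!\, \d V(z)}{\pi^n (1+|z|^2)^{n+1}},
\]
and the holomorphic function $f$ pulls back to a function $\tilde f$ holomorphic on $\{|z| < R_0\}$ with $\tilde f(0) = f(x)$.

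Next I would apply the polar-coordinate formula \eqref{eq: TransformToPolarCoordinates} to express the left-hand side as
\[
\int_{\AffineBall{x}{R_0}} \! h(\diastasis(x,y)) f(y) \, \Omega(y)
= 2n \int_0^{R_0} \! \frac{h(r)\, r^{2n-1}}{(1+r^2)^{n+1}} \left(\int_{S^{2n-1}} \! \tilde f(r\xi)\, \d\xi\right) \d r.
\]
The key step is then to observe that the inner spherical average equals $\tilde f(0) = f(x)$, independently of $r$. This is the mean-value property of holomorphic functions over concentric spheres in $\C^n$: expanding $\tilde f$ in its power series on the ball $\{|z|<R_0\}$, each nontrivial monomial $z^\alpha$ integrates to zero against the rotationally invariant probability measure on $S^{2n-1}$, leaving only the constant term $\tilde f(0)$. (Equivalently, one can invoke the iterated Cauchy formula variable by variable.)

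Pulling the constant $f(x)$ out of the integral and reassembling the polar formula in reverse then yields
\[
2n \int_0^{R_0} \! \frac{h(r)\, r^{2n-1}}{(1+r^2)^{n+1}}\, \d r \cdot f(x)
= f(x) \int_{\AffineBall{x}{R_0}} \! h(\diastasis(x,y))\, \Omega(y),
\]
which is the claimed identity. I do not expect a serious obstacle: the only subtlety is justifying the interchange of the spherical integral with the power-series expansion of $\tilde f$, which is immediate from uniform convergence of the Taylor series on compact subsets of $\{|z|<R_0\}$ and a monotone limit from inner balls $\{|z|<R_0-\varepsilon\}$ if $h$ is only assumed continuous up to $R_0$.
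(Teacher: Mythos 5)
Your proof is correct and takes essentially the same route as the paper: choose a canonical coordinate centered at $x$, pass to polar coordinates, and kill the spherical integral by the mean-value property. The only cosmetic difference is that the paper observes $f$ is harmonic and cites the mean-value property for harmonic functions (working with the difference $f(y)-f(x)$), whereas you justify the same spherical average identity directly by a power-series expansion of $\tilde f$; both routes are standard and equivalent.
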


\begin{proof}
Define $\expr{J}$ as the difference between the two sides of the required identity.
Then $\expr{J} = \int_{\AffineBall{x}{R_0}} \! h(\diastasis(x,y)) \{f(y)-f(x)\} \, \Omega(y)$.
We wish to show that $\expr{J} = 0$.

Choose a canonical coordinate $z$ centered at $x$.
Write $\EuBall{R_0}$ for the Euclidean ball in $\C^n$ centered at the origin of radius $R_0$.
    In terms of this coordinate $z$,
    \begin{align*}
    \expr{J} &= \int_{\EuBall{R_0}} \! \LocExpOfQ(|z|) \{f(z) - f(0)\} \, \frac{n! \,\d V(z)}{\pi^n(1+|z|^2)^{n+1}} \\
    &= \int_{\EuBall{R_0}} \! \frac{ \LocExpOfQ(|z|) \{f(z) - f(0)\}}{(1+|z|^2)^{n+1}} \, \frac{n! \,\d V(z)}{\pi^n}
    \end{align*}
    Transform this integral to polar coordinates using \eqref{eq: TransformToPolarCoordinates}:
    \begin{align*}
    \expr{J} &= 2n \int_0^{R_0}  \! r^{2n-1} \int_{S^{2n-1}} \! \frac{ \LocExpOfQ(r) \{f(r\xi) -f(0)\}}{(1+r^2)^{n+1}} \,  \d\mu(\xi) \d r \\
        &= 2n \int_0^{R_0}  \! \frac{r^{2n-1} \LocExpOfQ(r)}{(1+r^2)^{n+1}}  \int_{S^{2n-1}} \!  \{f(r\xi) -f(0)\} \, \d\mu(\xi) \d r
    \end{align*}

    A holomorphic function $f$ is harmonic.
    By the mean value property of harmonic functions (see 1.4 of \cite{ABR00}), we have $\int_{S^{2n-1}} \!  \{f(r\xi) -f(0)\} \, \d\mu(\xi) = 0$.
    Thus $\expr{J} = 0$, which completes the proof.
\end{proof}

\section{Proof of main theorem} \label{sec: ProofOfAsymptotic}

Let $m$ be a nonnegative integer and $s \in H^0(\P^n, \twistSheaf{m +d})$.
    Suppose $R:\PosInt \to \PosReals$ is a function with $\lim_{m\approaches \infty} R(m) = 0$.
        We will choose a particular $R$ later.
By our definition \eqref{eq: DefineAssociatedIntegralOperator} of $\K_{r^mp}$,
\begin{align}
\K_{r^mp}(s,s)
&= \int_{\P^n} \int_{\P^n} \! \frac{(r^mp)(\xy)\conj{s(x)}s(y)}{(r^mp)(\xx)(r^mp)(\yy)} \, \Omega(y)\Omega(x) \label{eq: TheDoubleIntegral} \\
&= \expr{A} + \expr{B} + \expr{C},
\end{align}
where
\begin{align*}
\expr{A} &\defas \int_{\P^n} \int_{\AffineBall{x}{R(m)}} \!
                    \left[\frac{|r(\xy)|^2}{r(\xx)r(\yy)}\right]^m \frac{\conj{s(x)}s(y)}{(r^mp)(\yx)} \, \Omega(y)\Omega(x) \\
\expr{B} &\defas \int_{\P^n} \int_{\AffineBall{x}{R(m)}} \! \left[\frac{|r(\xy)|^2}{r(\xx)r(\yy)}\right]^m
                    \left[\frac{|p(\xy)|^2}{p(\xx)p(\yy)}-1\right]\frac{\conj{s(x)}s(y)}{(r^mp)(\yx)} \, \Omega(y)\Omega(x) \\
\expr{C} &\defas \int_{\P^n} \int_{\{y:\, \diastasis(x,y) \ge R(m)\}} \! \frac{(r^mp)(\xy)\conj{s(x)}s(y)}{(r^mp)(\xx)(r^mp)(\yy)} \, \Omega(y)\Omega(x)
\end{align*}
Term $\expr{A}$ will be dominant for our eventual choice of $R$.

First we compute $\expr{A}$.
    The zero section of the polarization of $p$ lies off the diagonal of $\P^n \times \P^n$.
    Hence for sufficiently large $m$, we have $R(m)$ small, so that for $\diastasis(x,y) < R(m)$,
            the expression $\frac{\conj{s(x)}s(y)}{(r^mp)(\yx)}$ is well-defined and holomorphic in $y$.
Note that
\[
\left[\frac{|r(\xy)|^2}{r(\xx)r(\yy)}\right]^m  = \frac{1}{(1 + \diastasis(x,y))^m}
\]
Hence, we may use Lemma \ref{lem: ComputeRotationallyInvariantIntegrand}, which gives for each $x$
\begin{multline*}
\int_{\AffineBall{x}{R(m)}} \!
                    \left[\frac{|r(\xy)|^2}{r(\xx)r(\yy)}\right]^m \frac{\conj{s(x)}s(y)}{(r^mp)(\yx)} \, \Omega(y) \\
= \frac{\conj{s(x)}s(x)}{r^mp(\xx)}
        \int_{\AffineBall{x}{R(m)}} \!\left[\frac{|r(\xy)|^2}{r(\xx)r(\yy)}\right]^m \,\Omega(y)
\end{multline*}
Integrating with respect to $x$,
\[
\expr{A}
    = \int_{\P^n} \! \frac{\conj{s(x)}s(x)}{r^mp(\xx)}
        \int_{\AffineBall{x}{R(m)}} \!\left[\frac{|r(\xy)|^2}{r(\xx)r(\yy)}\right]^m \,\Omega(y) \Omega(x) \\
\]
Taking the difference of \eqref{eq: TotalIntegralOfRM} and \eqref{eq: OffDiagonalIntegralOfRM},
\[
\int_{\AffineBall{x}{R(m)}} \!\left[\frac{|r(\xy)|^2}{r(\xx)r(\yy)}\right]^m \,\Omega(y)
    = \frac{n!}{m^n}+\bigO\left(\frac{1}{m^{n+1}} + \frac{1}{e^{\frac{m}{2}R(m)^2}}\right)
\]
Hence
\begin{align}
\expr{A} &= \left[\frac{n!}{m^n}+\bigO\left(\frac{1}{m^{n+1}} + \frac{1}{e^{\frac{m}{2}R(m)^2}}\right)\right]\int_{\P^n} \! \frac{\conj{s(x)}s(x)}{(r^mp)(\xx)} \, \Omega(x) \notag \\
&= \left[\frac{n!}{m^n}+\bigO\left(\frac{1}{m^{n+1}} + \frac{1}{e^{\frac{m}{2}R(m)^2}}\right)\right]\norm{s}^2 \label{eq: AsymptoticsOfA}
\end{align}

Next, we estimate $\expr{B}$.
By Lemma \ref{lem: BoundLocallyDiastasisOfP},
The modulus of its integrand is
\begin{align*}
& \Modulus{ \left[\frac{|r(\xy)|^2}{r(\xx)r(\yy)}\right]^m
                    \left[\frac{|p(\xy)|^2}{p(\xx)p(\yy)}-1\right]\frac{\conj{s(x)}s(y)}{(r^mp)(\yx)}
                  }  \\
&=  \left[\frac{|r(\xy)|^2}{r(\xx)r(\yy)}\right]^{m/2} \Modulus{\left[\frac{|p(\xy)|^2}{p(\xx)p(\yy)}\right]^{1/2}-\left[\frac{p(\xx)p(\yy)}{|p(\xy)|^2}\right]^{1/2}} \\
&\qquad\cdot\frac{|s(x)|}{(r^mp)^{1/2}(\xx)} \frac{|s(y)|}{(r^mp)^{1/2}(\yy)} \\
&\le  \BoundForLocDi \diastasis(x,y)^2\left[\frac{|r(\xy)|^2}{r(\xx)r(\yy)}\right]^{m/2} \frac{|s(x)|}{(r^mp)^{1/2}(\xx)} \frac{|s(y)|}{(r^mp)^{1/2}(\yy)}
\end{align*}
Hence
\begin{align*}
|\expr{B}| &\le \int_{\P^n} \int_{\AffineBall{x}{R(m)}} \! \Modulus{ \left[\frac{|r(\xy)|^2}{r(\xx)r(\yy)}\right]^m
                    \left[\frac{|p(\xy)|^2}{p(\xx)p(\yy)}-1\right]\frac{\conj{s(x)}s(y)}{(r^mp)(\yx)}
                  } \, \Omega(y)\Omega(x) \\
&= \BoundForLocDi R(m)^2 \int_{\P^n} \int_{\AffineBall{x}{R(m)}} \! \left[\frac{|r(\xy)|^2}{r(\xx)r(\yy)}\right]^{m/2} \frac{|s(x)|}{(r^mp)^{1/2}(\xx)} \frac{|s(y)|}{(r^mp)^{1/2}(\yy)} \, \Omega(y)\Omega(x)
\end{align*}
By Lemma \ref{lem: ApproximateError}, this becomes
\begin{align*}
|\expr{B}| &\le \BoundForLocDi R(m)^2 \sqrt{\int_{\AffineBall{x}{R(m)}} \! \left[\frac{|r(\xy)|^2}{r(\xx)r(\yy)}\right]^m \,\Omega(y)} \sqrt{ \int_{\AffineBall{\bullet}{R_0}} \Omega} \int_{\P^n} \! \frac{|s|^2}{r^mp} \, \Omega\\
&\le  \BoundForLocDi  R(m)^2 \sqrt{\int_{\P^n} \! \left[\frac{|r(\xy)|^2}{r(\xx)r(\yy)}\right]^m \,\Omega(y)}
        \sqrt{ \int_{\AffineBall{\bullet}{R_0}} \Omega} \norm{s}^2
\end{align*}
Hence, by the asymptotics \eqref{eq: TotalIntegralOfRM} and \eqref{eq: NearDiagonalVolume},
\begin{align}
\expr{B} &= \bigO\left(R(m)^2\right) \sqrt{\frac{n!}{m^n}} \sqrt{ \bigO\left(R(m)^{2n} \right)} \norm{s}^2  \notag\\
&= \bigO \left(\frac{R(m)^{n+2}}{m^{n/2}}\right) \norm{s}^2 \label{eq: AsymptoticsOfB}
\end{align}

Finally, we estimate $\expr{C}$.
Let
\[
M_p \defas \sup_{x,y\in\P^n} \left[\frac{|p(\xy)|^2}{p(\xx)p(\yy)}\right]^{1/2}
\]
By the compactness of $\P^n$, this positive constant $M_p$ is finite.
Hence
\begin{align*}
& \frac{|(r^mp)(\xy)||s(x)||s(y)|}{(r^mp)(\xx)(r^mp)(\yy)}  \\
&=  \left[\frac{|r(\xy)|^2}{r(\xx)r(\yy)}\right]^{m/2} \left[\frac{|p(\xy)|^2}{p(\xx)p(\yy)}\right]^{1/2} \frac{|s(x)|}{(r^mp)^{1/2}(\xx)} \frac{|s(y)|}{(r^mp)^{1/2}(\yy)} \\
&\le \BoundForGloDi \left[\frac{|r(\xy)|^2}{r(\xx)r(\yy)}\right]^{m/2} \frac{|s(x)|}{(r^mp)^{1/2}(\xx)} \frac{|s(y)|}{(r^mp)^{1/2}(\yy)}
\end{align*}
Hence
\[
|\expr{C}| \le \BoundForGloDi \int_{\P^n} \int_{\{y:\, \diastasis(x,y) \ge R(m)\}} \! \left[\frac{|r(\xy)|^2}{r(\xx)r(\yy)}\right]^{m/2}  \frac{|s(x)|}{(r^mp)^{1/2}(\xx)} \frac{|s(y)|}{(r^mp)^{1/2}(\yy)} \, \Omega(y)\Omega(x)
\]
By Corollary \ref{cor: ApproximateWeaklyError}, this becomes
\begin{align*}
|\expr{C}| &\le \BoundForGloDi \sqrt{\int_{\{y:\, \diastasis(x,y) \ge R(m)\}} \! \left[\frac{|r(\xy)|^2}{r(\xx)r(\yy)}\right]^m \,\Omega(y)} \int_{\P^n} \frac{|s|^2}{r^mp} \, \Omega \\
&= \BoundForGloDi \sqrt{\int_{\{y:\, \diastasis(x,y) \ge R(m)\}} \! \left[\frac{|r(\xy)|^2}{r(\xx)r(\yy)}\right]^{m} \,\Omega(y)} \norm{s}^2
\end{align*}
Hence, by the asymptotic \eqref{eq: OffDiagonalIntegralOfRM},
\begin{align}
\expr{C}&= \bigO(1) \sqrt{\bigO\left(\frac{1}{e^{\frac{m}{2}R(m)^2}}\right)} \norm{s}^2  \notag \\
&= \bigO\left(\frac{1}{e^{\frac{m}{4}R(m)^2}}\right) \norm{s}^2 \label{eq: AsymptoticsOfC}
\end{align}

Since $\K_{r^mp}(s,s) = \expr{A} + \expr{B} + \expr{C}$,
    combining \eqref{eq: AsymptoticsOfA}, \eqref{eq: AsymptoticsOfB} and \eqref{eq: AsymptoticsOfC},
\begin{align*}
\K_{r^mp}(s,s) &= \left\{\frac{n!}{m^n}+\bigO\left(\frac{1}{m^{n+1}} + \frac{1}{e^{\frac{m}{2}R(m)^2}}+\frac{R(m)^{n+2}}{m^{n/2}}+\frac{1}{e^{\frac{m}{4}R(m)^2}}\right)\right\} \norm{s}^2 \\
&= \left\{\frac{n!}{m^n}+\bigO\left(\frac{1}{m^{n+1}} +\frac{R(m)^{n+2}}{m^{n/2}}+\frac{1}{e^{\frac{m}{4}R(m)^2}}\right)\right\} \norm{s}^2 \label{eq: FinalAsymptotic}
\end{align*}

To complete the proof, it suffices to find $R$ such that $\lim_{m\approaches \infty}R(m) = 0$ and
\begin{equation}  \label{eq: RequiredAsymptotic}
\frac{1}{m^{n+1}} +\frac{R(m)^{n+2}}{m^{n/2}}+\frac{1}{e^{\frac{m}{4}R(m)^2}} = \bigO\left(\frac{(\log m)^{n+2}}{m^{n+1}}\right)
\end{equation}
Indeed, such a function is given by
\[
R(m) = \frac{\log m}{\sqrt{m}}
\]
\qed

\section{Application to Quillen's positivstellensatz} \label{sec: ProveTheoremFromAsymptotic}

Let $n$ be a positive integer. Let $\C[\ZZ]$ denote the complex polynomial algebra on the indeterminates $Z_0,\ldots, Z_n,\conj{Z_0},\ldots, \conj{Z_n}$. A {\emph{multiindex}} $\alpha$ is a sequence $(\alpha_0,\ldots,\alpha_n)$ of $n+1$ nonnegative integers whose {\emph{length}} $|\alpha|$ is $\alpha_0+\cdots+\alpha_n$. Given a nonnegative integer $d$, a {\emph{bihomogeneous polynomial of bidegree $(d,d)$}} is a finite sum $\sum_{|\alpha|=|\beta|=d} a_{\alpha\beta} Z^\alpha \conj{Z}^\beta$, where each $a_{\alpha\beta}$ is a complex scalar and $Z^\alpha \conj{Z}^\beta\defas Z_0^{\alpha_0}\cdots Z_n^{\alpha_n} \conj{Z_0}^{\beta_0} \cdots \conj{Z_n}^{\beta_n}$. This polynomial is said to be {\emph{Hermitian}} if $\conj{a_{\alpha\beta}}=a_{\beta\alpha}$ for each $\alpha$ and $\beta$. A polynomial is said to be {\emph{holomorphic}} if only the indeterminates $Z_0,\ldots, Z_n$ occur. Given a holomorphic polynomial $s(Z)$, write $|s(Z)|^2\defas s(Z)\conj{s(Z)}$.

With these concepts, we can state Quillen's Positivstellensatz.

\begin{corollary} \label{cor: QuillenPositivstellensatz}
Let $p$ be a Hermitian bihomogeneous polynomial of bidegree $(d,d)$. If $p(\zz)\gt 0$ for each point $z\neq 0$ in $\C^{n+1}$, then for sufficiently large $m$, there exists a basis $\{s_\eta\}_{|\eta|=m+d}$ of the holomorphic polynomials of degree $m+d$ such that
\begin{equation} \label{eq: QuillenCertificate}
(|Z_0|^2+\cdots+|Z_n|^2)^mp(\ZZ)=\sum_{|\eta|=m+d} |s_\eta(Z)|^2
\end{equation}
\end{corollary}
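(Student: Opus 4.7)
My plan is to deduce Quillen's positivstellensatz from the main theorem by the Catlin--D'Angelo equivalence: the integral operator $\K_{r^mp}$ is positive-definite precisely when the coefficient matrix of the bihomogeneous polynomial $(r^m p)(Z, \conj{Z})$ is, and in that case the polynomial is a sum of Hermitian squares of a basis of holomorphic polynomials.

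First I would verify that the main theorem applies. Fix $\Phi_i \defas Z_i$ as a basis of $H^0(\P^n, \twistSheaf{1})$, so $r(\ZZ) = |Z_0|^2 + \cdots + |Z_n|^2$. The assumption $p(\zz) > 0$ on $\C^{n+1} \setminus \{0\}$ ensures that the formula $p(s, \conj{s}) \defas |s|^2 / p(\ZZ)$ defines a genuine positive Hermitian metric on $\twistSheaf{d}$, and the bihomogeneous form of $p$ realizes this metric as globalizable with $a_{\alpha\beta}$ equal to the coefficients of $p$. The main theorem then furnishes $m_0$ such that $\K_{r^mp}(s, s) > 0$ for every nonzero $s \in H^0(\P^n, \twistSheaf{m+d})$ whenever $m \geq m_0$.

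Second, I would convert this analytic positivity into algebraic positive-definiteness of the Hermitian coefficient matrix $C \defas (c_{\eta\zeta})$ arising from the expansion $(r^m p)(\ZZ) = \sum_{|\eta|=|\zeta|=m+d} c_{\eta\zeta} Z^\eta \conj{Z}^\zeta$. Substituting the polarized form $(r^m p)(\xy) = \sum_{\eta,\zeta} c_{\eta\zeta} x^\eta \conj{y}^\zeta$ into \eqref{eq: DefineAssociatedIntegralOperator} and applying Fubini yields
\begin{equation*}
\K_{r^mp}(s, s) = \sum_{\eta,\zeta} c_{\eta\zeta} v_\eta \conj{v_\zeta}, \qquad v_\eta \defas (Z^\eta, s),
\end{equation*}
where $(\cdot, \cdot)$ is the inner product of \eqref{eq: DefineInnerProductOnVectorSpaceOfHolomorphicPolynomials}. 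Since $\{Z^\eta\}_{|\eta| = m+d}$ is a basis of $H^0(\P^n, \twistSheaf{m + d})$, the map $s \mapsto (v_\eta)$ is a linear isomorphism onto its coefficient space, so the positivity of $\K_{r^mp}$ provided by the theorem forces $C$ to be positive-definite.

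Finally, a Cholesky factorization $C = L^* L$ with $L$ invertible gives the desired representation: setting $s_\eta(Z) \defas \sum_\zeta \conj{L_{\eta\zeta}} Z^\zeta$, direct expansion produces $(r^m p)(\ZZ) = \sum_{|\eta|=m+d} |s_\eta(Z)|^2$, while the invertibility of $L$ makes $\{s_\eta\}_{|\eta| = m+d}$ a basis of the holomorphic polynomials of degree $m+d$. Since the main theorem already delivers the analytic content, the principal step requiring care is the Fubini manipulation in step two; once that expansion is established, the remainder is routine linear algebra.
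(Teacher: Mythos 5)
Your proposal is correct, and it reaches the same conclusion as the paper but by a somewhat different linear-algebra route. Both proofs hinge on the same computation: expanding $r^m p$ in a basis and applying Fubini to \eqref{eq: DefineAssociatedIntegralOperator} turns $\K_{r^mp}(s,s)$ into a quadratic form in the coefficient matrix of $r^m p$, contracted against inner products of $s$ with basis elements. The paper, however, first fixes an orthonormal basis $(e_\gamma)$ of $H^0(\P^n,\twistSheaf{m+d})$, diagonalizes the resulting Hermitian coefficient matrix as $PDP^*$, and thereby writes $r^m p = \sum_\eta \lambda_\eta f_\eta \conj{f_\eta}$ with $(f_\eta)$ still orthonormal; the double integral then collapses immediately to $\K_{r^mp}(f_\eta, f_\eta) = \lambda_\eta$, so the main theorem yields $\lambda_\eta > 0$ and the certificate is $s_\eta = \sqrt{\lambda_\eta}\, f_\eta$. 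You instead stay in the monomial basis $\{Z^\eta\}$, show $\K_{r^mp}(s,s) = \sum_{\eta,\zeta} c_{\eta\zeta}\, v_\eta \conj{v_\zeta}$ with $v_\eta = (Z^\eta, s)$, observe that $s \mapsto (v_\eta)$ is a bijection (it is conjugate-linear rather than linear, but that does not affect surjectivity, which is all you need), deduce that the coefficient matrix $C$ is positive definite, and then factor $C = L^*L$ by Cholesky. The net effect is the same: your approach trades the paper's choice of an orthonormal basis plus unitary diagonalization for the monomial basis plus the nondegeneracy-of-inner-product argument plus a Cholesky factorization. Neither route is more elementary than the other; the paper's has the cosmetic advantage that the positivity is read off eigenvalue by eigenvalue, while yours makes the ``positive definiteness of the coefficient matrix'' statement explicit, which is arguably closer to the way Catlin--D'Angelo phrase the equivalence. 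One small slip to fix: you call $s \mapsto (v_\eta)$ a linear isomorphism, but since $v_\eta = (Z^\eta, s)$ is antilinear in $s$, the map is an antilinear bijection; this is harmless for the argument but should be stated correctly.
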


%This result is called a positivstellensatz as the identity \eqref{eq: QuillenCertificate} certifies that $p$ is strictly positive on the unit sphere of $\C^{n+1}$.
%Indeed, let a point $z$ in $\C^{n+1}$ with unit modulus be given. Since $\{s_\eta\}_{|\eta|=m+d}$ forms a basis of the holomorphic polynomials of degree $m+d$, there exists $\theta$ such that $s_\theta(z)\neq 0$ (otherwise $Z-z$ would be a common factor of all $s_\eta$).
%Hence $\sum_{|\eta|=m+d} |s_\eta(Z)|^2 \ge |s_\theta|^2\gt 0$, so by \eqref{eq: QuillenCertificate}, we have $|z|^{2m} p(\zz)\gt 0$. Since $z$ has unit modulus, this becomes $p(\zz)\gt 0$. Thus we have shown that if $p$ satisfies \eqref{eq: QuillenCertificate}, then $p(\zz)\gt 0$ for each $|z|=1$.

\begin{proof}
Recall from the introduction that we chose a basis $(\Phi_0,\ldots,\Phi_n)$ of $H^0(\P^n,\twistSheaf{1})$.
The map $Z_0\mapsto \Phi_0,\ldots, Z_n\mapsto \Phi_n$ induces a graded $\C$-algebra isomorphism
\begin{equation} \label{eq: iso}
C[Z] \to \bigoplus_{k=0}^\infty H^0(\P^n,\twistSheaf{k})
\end{equation}
This isomorphism induces the given Hermitian bihomogeneous polynomial $p$ of bidegree $(d,d)$ with a globalizable metric on $\twistSheaf{d}$,
    which we will also denote as $p$ by abuse of notation.

Recall the inner product on $H^0(\P^n, \twistSheaf{m+d})$ defined by \eqref{eq: DefineInnerProductOnVectorSpaceOfHolomorphicPolynomials}.
Choose an orthonormal basis $(e_\gamma)$ of $H^0(\P^n, \twistSheaf{m+d})$.
In terms of this basis, write $r^mp =\sum_{\gamma,\delta} c_{\gamma\delta} e_{\gamma} \conj{e_\delta}$.
Since this polynomial $r^mp$ is Hermitian, its coefficients form a Hermitian matrix $(c_{\delta\gamma})$.
Diagonalizing, there exists a unitary matrix $P=(P_{\gamma\eta})$ and a real-valued diagonal matrix $D=\diag(\ldots,\lambda_\eta,\ldots)$
    such that $(c_{\delta\gamma})=PDP^*$.
    In particular, we have $c_{\gamma\delta}=\sum_{\eta} P_{\gamma\eta} \lambda_\eta \conj{P_{\delta\eta}}$.
Hence, setting $f_\eta \defas \sum_{\gamma} P_{\gamma\eta} e_\gamma$,
\begin{align}
r^mp &= \sum_{\eta,\gamma,\delta} P_{\gamma\eta} \lambda_\eta \conj{P_{\delta\eta}} e_{\gamma} \conj{e_\delta} \notag \\
&= \sum_{\eta} \lambda_\eta \sum_{\gamma} P_{\gamma\eta} e_\gamma \conj{\sum_{\delta} P_{\delta\eta} e_\delta} \notag \\
&=  \sum_{\eta} \lambda_\eta f_\eta \conj{f_\eta} \label{eq: WeCanDiagonalizeRMP}
\end{align}

We claim that $(f_\eta)$ is an orthonormal basis of $H^0(\P^n, \twistSheaf{m+d})$.
    Indeed, the basis $(e_\gamma)$ is chosen to be orthonormal, hence
        \begin{align*}
    (f_\eta,f_\theta) &= \sum_{\gamma,\delta} P_{\gamma\eta} \conj{P_{\delta\theta}} (e_\gamma,e_\delta) \\
                            &= \sum_{\gamma} P_{\gamma\eta} \conj{P_{\gamma\theta}}
    \end{align*}
            The columns of a unitary matrix are orthonormal under the standard inner product.
    The matrix $P$ is unitary, hence $\sum_{\gamma} P_{\gamma\eta} \conj{P_{\gamma\theta}} = [\eta = \theta]$, the Kronecker delta.
Therefore $(f_\eta,f_\theta) = [\eta=\theta]$, which proves the claim.

By \eqref{eq: DefineInnerProductOnVectorSpaceOfHolomorphicPolynomials} and \eqref{eq: DefineAssociatedIntegralOperator},
    the inner product $\K_{r^mp}(f_\eta,f_\eta)$ is given by a double integral:
\begin{equation}
\K_{r^mp}(f_\eta,f_\eta) = \int_{\P^n} \int_{\P^n} \! \frac{(r^mp)(\xy)\conj{f_\eta(x)}f_\eta(y)}{(r^mp)(\xx)(r^mp)(\yy)} \, \Omega(y)\Omega(x)
\end{equation}
By \eqref{eq: WeCanDiagonalizeRMP} and the orthonormality of $\{f_\eta\}$, this becomes
\begin{equation} \label{eq: contractAlongFEta}
\begin{aligned}[b]
\K_{r^mp}(f_\eta,f_\eta) &= \int_{\P^n} \int_{\P^n} \! \frac{\sum_\theta \lambda_\theta f_\theta(x)\conj{f_\theta(y)}\conj{f_\eta(x)}f_\eta(y)}{(r^mp)(\xx)(r^mp)(\yy)} \, \Omega(y)\Omega(x) \\
&=\sum_{\theta} \lambda_\theta \int_{\P^n} \! \frac{f_\theta(x) \conj{f_\eta(x)}}{(r^mp)(\xx)} \, \Omega(x) \int_{\P^n} \! \frac{f_\eta(y) \conj{f_\theta(y)}}{(r^mp)(\yy)} \, \Omega(y) \\
&= \sum_{\theta} \lambda_\theta (f_\theta,f_\eta) (f_\eta,f_\theta) \\
&= \lambda_\eta
\end{aligned}
\end{equation}

By the main theorem, for sufficiently large $m$ and each $f_\eta$,
\[
\K_{r^mp}(f_\eta,f_\eta) = \left\{\frac{n!}{m^n}+ \bigO\left(\frac{(\log m)^{n+2}}{m^{n+1}}\right)\right\} \norm{f_\eta}^2
\]
A global section that forms part of a basis is necessarily nonzero, hence $\norm{f_\eta}^2 \neq 0$.
The above asymptotic has leading coefficient $n! \gt 0$, hence $\K_{r^mp}(f_\eta,f_\eta)\gt 0$ for $m$ large.
    From \eqref{eq: contractAlongFEta}, we get $\lambda_\eta\gt 0$.
    Thus \eqref{eq: WeCanDiagonalizeRMP} can be rewritten as $r^mp= \sum_\eta \Modulus{\sqrt{\lambda_\eta} f_\eta}^2$
        where $(\sqrt{\lambda_\eta} f_\eta)$ is a basis of $H^0(\P^n, \twistSheaf{m+d})$.
Apply the $\C$-algebra isomorphism \eqref{eq: iso} between $\C[Z]$ and $\bigoplus_{k=0}^\infty H^0(\P^n,\twistSheaf{k})$ to complete the proof.
\end{proof}

\end{document}